\theoremstyle{plain}
\newtheorem{thm}{Theorem}
\newtheorem{lem}[thm]{Lemma}
\newtheorem{cor}[thm]{Corollary}
\newtheorem{prop}[thm]{Proposition}
\newtheorem{conj}[thm]{Conjecture}
\theoremstyle{definition}
\newtheorem{rem}[thm]{Remark}
\newtheorem{const}[thm]{Construction}
\numberwithin{thm}{section}
\newcommand{\Hom}{\operatorname{Hom}}
\newcommand{\Thick}{{\sf Thick}}
\newcommand{\Ext}{\operatorname{Ext}}
\newcommand{\pd}{\operatorname{pd}}
\newcommand{\B}{{\sf B}}
\newcommand{\C}{{\sf C}}
\newcommand{\D}{{\sf D}}
\newcommand{\DR}{\D^b(R)}
\newcommand{\DS}{\D^b(S)}
\newcommand{\T}{{\sf T}}
\newcommand{\Sig}{{\sf \Sigma}}
\newcommand{\Rmod}{{R \textrm{-mod}}}
\newcommand{\comment}[1]{}
\begin{document}

\title[A Generalization of the Auslander-Reiten Conjecture for $\D^b(R\textrm{-}\rm{\lowercase{mod}})$]{A Generalization of the Auslander-Reiten Conjecture for the Bounded Derived Category}
\author{Kosmas Diveris \ and \ Marju Purin}\address{Department of Mathematics\\Syracuse University\\Syracuse\\ NY 13244\\USA}\email{kjdiveri@syr.edu}
\address{Department of Mathematics\\Manhattan College\\ Riverdale\\ NY 10471\\ USA}\email{marju.purin@manhattan.edu}

\date{August 27, 2011}
\maketitle 

\begin{abstract} 
We study the bounded derived category $\D^b(\Rmod)$ of a left Noetherian ring $R$. We give a version of the Generalized Auslander-Reiten Conjecture for $\D^b(\Rmod)$ that is equivalent to the classical statement for the module category and is preserved under derived equivalence. 
\end{abstract}

\begin{section}{Introduction}
In this paper we enter the realm of some homological conjectures that have motivated much research in ring theory. We begin by recalling these conjectures and giving a brief overview of their status. Throughout this note $R$ is a left Noetherian ring and $\Rmod$ denotes the category of finitely generated left $R$-modules. 

Recall the statement of the Auslander-Reiten Conjecture \cite{AR}:

\begin{conj} [Auslander-Reiten Conjecture] \label{conj:arc} If $M$ is an $R$-module such that $\Ext_R^i(M,M) = 0 = \Ext_R^i(M,R)$ for all $i \geq 1$, then $M$ is projective.  \end{conj}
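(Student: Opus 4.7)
The final statement is the Auslander-Reiten conjecture itself, which remains open in general. What follows is therefore a plan for the classical line of attack in the Artin algebra setting, together with the obstruction that prevents it from extending to arbitrary left Noetherian rings.

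The plan is to argue by contradiction. Suppose $M$ is indecomposable, non-projective, and satisfies $\Ext_R^i(M, M) = 0 = \Ext_R^i(M, R)$ for all $i \geq 1$. First I would invoke the almost split sequence
\[
0 \to \tau M \to E \to M \to 0,
\]
which exists whenever $R$ is an Artin algebra and $M$ is a non-projective indecomposable. Its non-splitness produces a nonzero class in $\Ext_R^1(M, \tau M)$, realized as the image of $\mathrm{id}_M$ under the connecting homomorphism $\Hom_R(M, M) \to \Ext_R^1(M, \tau M)$ associated to applying $\Hom_R(M, -)$.

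Next I would exploit the two vanishing hypotheses in tandem. The condition $\Ext_R^i(M, R) = 0$ for $i \geq 1$ is what allows one to control the Auslander-Reiten translate: recalling that $\tau M$ is built from the transpose $\operatorname{Tr} M$ and hence from a projective presentation of $M$, the hypothesis $\Ext_R^i(M, R) = 0$ together with $\Ext_R^i(M, M) = 0$ propagates (via the standard long exact sequences and Auslander-Reiten duality) to the vanishing of $\Ext_R^1(M, \tau M)$. Feeding this back into the connecting map above forces the class of the almost split sequence to be zero, contradicting its non-splitness and showing that $M$ must be projective after all.

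The hard part — and the reason the full conjecture is still open — is that almost split sequences, the transpose, and the Auslander-Reiten formula all presuppose a functorial Nakayama-type duality which a general left Noetherian $R$ does not possess. Even in the commutative Gorenstein case one needs substantial additional machinery (Auslander-Buchsbaum type depth arguments) to push the idea through. This structural gap is presumably why the present paper reformulates the conjecture inside $\D^b(\Rmod)$ to secure derived invariance, rather than attempting a direct proof of Conjecture~\ref{conj:arc} in full generality.
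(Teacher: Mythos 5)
You were right not to attempt an actual proof: the statement is labeled as a conjecture in the paper and is nowhere proved there; indeed the paper immediately cites Schulz's algebras $k\left<X,Y\right>/(X^2,Y^2,XY-cYX)$ over a skew field as counterexamples in the stated generality, and records that the question is open for Artin algebras, finite-dimensional algebras over a field, and commutative rings. So the only fair comparison is between your sketched "classical line of attack" and the known state of the art, and here your plan contains a concrete error rather than just an acknowledged gap. Over an Artin algebra the Auslander-Reiten formula gives
\begin{align*}
\Ext_R^1(M,\tau M) \;\cong\; D\,\underline{\operatorname{Hom}}_R(M,M),
\end{align*}
and for an indecomposable non-projective $M$ the identity map is a nonzero element of the stable endomorphism ring, so $\Ext_R^1(M,\tau M)$ is \emph{never} zero in the situation you set up. Consequently no manipulation of long exact sequences starting from $\Ext_R^{\geq 1}(M,M)=0=\Ext_R^{\geq 1}(M,R)$ can "propagate" to $\Ext_R^1(M,\tau M)=0$; if such a propagation existed, it would prove the conjecture for all Artin algebras, contradicting its open status (and, in the self-injective case, running up against Schulz's example for the generalized version). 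The genuine difficulty, which your sketch papers over, is precisely that the hypotheses control morphisms into shifts of $M$ and $R$, while the almost split sequence lives in $\Ext^1(M,\tau M)$, and there is no known mechanism relating the two without extra structure (e.g.\ the contravariantly finite subcategory or Gorenstein-type hypotheses in the literature the paper cites).

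Your closing observation is accurate and consonant with the paper's purpose: the authors do not attack Conjecture~\ref{conj:arc} directly, but instead formulate a derived-category analogue of the \emph{generalized} conjecture (Conjecture~\ref{conj:GARC}), prove it equivalent to the module-theoretic statement via the syzygy-type Construction~\ref{syzygy} and Proposition~\ref{Cx-Mod}, and deduce stability under derived equivalence. If you want to salvage your paragraph, replace the claimed deduction of $\Ext_R^1(M,\tau M)=0$ with an honest statement that the hypotheses are not known to interact with the almost split sequence, and note that any purported proof must fail for rings in the paper's standing generality because of Schulz's counterexample.
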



Counterexamples to this conjecture were provided by R. Schulz in the form of algebras over a skew field $k$:  \begin{align} \label{eq:schulz} k \left< X,Y \right> / (X^2, Y^2, XY- cYX) \end{align} where $c$ is a suitably chosen element of $k$ (see Section 3 of \cite{S}). The question remains open, however, for finite-dimensional algebras over a field, artin algebras, and commutative rings.
In fact, M. Auslander and I. Reiten showed that for artin algebras this conjecture is equivalent to the Generalized Nakayama Conjecture \cite{AR}. 

A natural generalization of the conjecture is the following:
\begin{conj} [Generalized Auslander-Reiten Conjecture] \label{conj:garc} If $M$ is an $R$-module such that $\Ext_R^i(M,M) = 0 = \Ext_R^i(M,R)$ for all $i \gg 0$, then $M$ has finite projective dimension.  \end{conj}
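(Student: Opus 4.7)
The Generalized Auslander-Reiten Conjecture is a well-known open problem --- unresolved even for finite-dimensional algebras over a field --- so rather than attempt a complete proof I will sketch the natural lines of attack and indicate where the genuine difficulty lies. The plan is to reduce to a cleaner category-theoretic statement by passing to $\D^b(\Rmod)$ and using that an $R$-module $M$ has finite projective dimension exactly when $M$ lies in the thick subcategory $\Thick(R) \subseteq \D^b(\Rmod)$ generated by the free module $R$.

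First I would translate the Ext-vanishing hypothesis into the derived category: the conditions $\Ext_R^i(M,M) = 0 = \Ext_R^i(M,R)$ for $i \gg 0$ become $\Hom_{\D^b(\Rmod)}(M, M[i]) = 0 = \Hom_{\D^b(\Rmod)}(M, R[i])$ for $i \gg 0$. The target is then to conclude $M \in \Thick(R)$. The virtue of this framing is that both the hypothesis and the conclusion are preserved under derived equivalence, matching the philosophy of the paper, so one may first try to prove the statement in a particularly tractable representative of the derived-equivalence class of $R$ and then transport the conclusion back.

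A concrete intermediate step is to pass through the Auslander-Buchweitz formalism: use Gorenstein-projective approximations to replace $M$ by a Gorenstein-projective module $G$ satisfying the same Ext-vanishing hypotheses, and then argue that a Gorenstein-projective $G$ with $\Ext_R^i(G,R) = 0$ for all large $i$ must already be projective. This would reduce the conjecture to the assertion that the Tate cohomology groups $\widehat{\operatorname{Ext}}{}^i(G,R)$ vanish identically once they vanish eventually --- the sort of rigidity known to hold under genuinely Gorenstein-like hypotheses on $R$.

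The main obstacle is closing this loop for an arbitrary Noetherian $R$: the examples of Schulz \cite{S} already show that Ext-vanishing need not be rigid enough to force finite projective dimension in general, and no mechanism is known that converts merely eventual $\Ext$-vanishing into a finite projective resolution in full generality. I expect the real difficulty to lie precisely here --- either in identifying the additional hypothesis on $R$ under which the reduction to Gorenstein-projectivity is lossless, or (as the present paper appears to pursue) in isolating a derived-invariant reformulation within $\D^b(\Rmod)$ under which partial results become transportable across derived equivalences even while the full conjecture remains open.
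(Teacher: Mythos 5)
This statement is a conjecture, and the paper offers no proof of it --- indeed it cannot, since the paper itself recalls that Schulz produced a counterexample in full generality. You were right not to attempt a proof, and your reframing of the conclusion as membership in $\Thick(R)$ and of the hypotheses as eventual orthogonality in $\D^b(R\textrm{-}\mathrm{mod})$ is exactly the reformulation the paper develops (Conjecture 3.1 and Theorem 3.4 there), so your sketch of the intended reduction is on target.

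One factual correction, though: you write that the conjecture is ``unresolved even for finite-dimensional algebras over a field.'' That is true of the original Auslander--Reiten Conjecture, but not of the Generalized version stated here. The paper explicitly notes that Schulz's algebra $k\langle X,Y\rangle/(X^2,Y^2,XY-cYX)$, with $k$ a field and $c$ not a root of unity, is a finite-dimensional self-injective counterexample to the Generalized conjecture. So the correct status is: false for finite-dimensional algebras in general, known for certain classes (group algebras of finite groups, complete intersections), and open for commutative rings. Your later remark that Schulz's examples show eventual Ext-vanishing is not rigid enough in general is consistent with this, but it contradicts your opening sentence; the opening sentence should be revised. Otherwise your assessment of where the difficulty lies --- and of what the paper actually accomplishes, namely derived invariance of the conjecture rather than its resolution --- is accurate.
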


In \cite{S}, R. Schulz also provided a counterexample to this conjecture in the form of a finite-dimensional self-injective algebra over a field. In fact, this algebra is defined by the same relations as in (\ref{eq:schulz}) where $k$ is taken to be a field and $c$ is any element in $k$ that is not a root of unity.  The conjecture holds for several classes of rings, for example, for group algebras of finite groups (Lemma 5.2.3 in \cite{B}), and complete intersections (Proposition 4.8 in \cite{AB}). The conjecture still remains open for some classes of rings, for example, for commutative rings.  Note that if a ring satisfies Conjecture\autoref{conj:garc}, then it also satisfies Conjecture\autoref{conj:arc}. 

While the Generalized Auslander-Reiten Conjecture fails for artin algebras in general, it is known that if an artin algebra satisfies Conjecture\autoref{conj:garc}, then it satisfies the Gorenstein Symmetry Question \cite{AR2,LH,D} (the latter is a meaningful statement only for algebras of finite injective dimension).  

 J. Wei has shown that for artin algebras both the Generalized and original Auslander-Reiten Conjectures are stable under a tilting equivalence \cite{W, W2}. Since a tilting equivalence is a special case of a derived equivalence, the author then asks whether these conjectures are stable under any derived equivalence. We consider all Noetherian rings and answer J. Wei's question in the affirmative for the Generalized Auslander-Reiten Conjecture. That is, any two derived equivalent Noetherian rings satisfy Conjecture\autoref{conj:garc} simultaneously. 

The argument in \cite{W} uses torsion theory which is well-understood in the setting of artin algebras. Our approach is to give a version of the Generalized Auslander-Reiten Conjecture for the bounded derived category of any Noetherian ring which we then show to be equivalent to the classical statement for the module category in Theorem\autoref{thm:equiv}. Next we show that the derived version of the conjecture passes through an equivalence of derived categories in Theorem\autoref{thm:derived}. Finally, we conclude in Corollary\autoref{cor:conj_is_stable} that any two derived equivalent Noetherian rings satisfy the classical Generalized Auslander-Reiten Conjecture simultaneously.


\section{Preliminaries}
In this section we recall definitions, set up notation, and collect some lemmata.

Let $\T$ be a triangulated category with shift $\Sig$. A non-empty full subcategory ${\sf S}$ of $\T$ is {\it triangulated} if it satisfies the conditions: \begin{enumerate}
\item If $X$ is in $\sf S$, then $\Sig^i X$ is in $\sf S$ for all $i \in \mathbb{Z}$.
\item If two of the terms in $\left\{ X, Y, Z \right\}$ from the triangle
\begin{align*} X \rightarrow Y \rightarrow Z \rightarrow \Sig X \end{align*} in $\T$ belong to ${\sf S}$, then remaining term also belongs to ${\sf S}$.  
\end{enumerate}

A triangulated subcategory ${\sf S}$ is {\it thick} if, in addition, it is closed under direct summands. If $\C$ is a collection of objects in a triangulated category $\T$, its {\it thick closure}, $\Thick(\C)$, is the intersection of all thick subcategories of $\T$ containing all of the objects in $\C$. 

Two objects $M$ and $N$ in $\T$ are {\it eventually orthogonal}, and we write $M \perp N$, if $\Hom_{\T}(M,\Sig^iN) = 0$ for all $|i| \gg 0$. Note that we do not require vanishing for every $i \in \mathbb{Z}$, as some authors do, when using this notation.  For a subcategory $\sf{C}$ of a triangulated category $\T$ we set  \begin{align*} ^{\perp}{\sf C} = \{ M \in \T \ | M \perp C \ {\rm for \ all} \ C \in \sf{C} \} , \ {\rm and} \end{align*} \begin{align*}  {\sf C}^{\perp} = \{ N \in \T \ | C \perp N \ {\rm for \ all} \ C \in \sf{C} \} \end{align*} If $\B$ is another subcategory we write $\B \perp \C$ if $\B \subseteq {^{\perp} \C}$, or equivalently, $\C \subseteq \B^{\perp}$. In this case, we say that the subcategories $\B$ and $\C$ are eventually orthogonal.

Our first observation is that the subcategories $^{\perp} \C$ and $\C^{\perp}$ are thick. 

\begin{lem} \label{lem:PerpThick} For any subcategory ${\sf C}$ of $\T$,  $^{\perp}{\sf C}$ and ${\sf C}^{\perp}$ are thick subcategories of $\T$. \end{lem}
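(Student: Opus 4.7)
I would verify the three defining properties of a thick subcategory directly from the definition of eventual orthogonality, handling $^{\perp}\mathsf{C}$ in detail; the argument for $\mathsf{C}^{\perp}$ is dual.

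For closure under shifts: given $M \in {}^{\perp}\mathsf{C}$ and $C \in \mathsf{C}$, the identity $\operatorname{Hom}_{\mathsf{T}}(\Sigma^j M, \Sigma^i C) = \operatorname{Hom}_{\mathsf{T}}(M, \Sigma^{i-j}C)$ shows that the left-hand side vanishes whenever $|i-j| \gg 0$, which for fixed $j$ is the same condition as $|i| \gg 0$. Hence $\Sigma^j M \perp C$ for every $j \in \mathbb{Z}$. Closure under direct summands is equally immediate: if $M = M_1 \oplus M_2$ then $\operatorname{Hom}_{\mathsf{T}}(M_1, \Sigma^i C)$ is a summand of $\operatorname{Hom}_{\mathsf{T}}(M, \Sigma^i C)$, and the latter vanishes for $|i| \gg 0$, so does the former, and likewise for $M_2$.

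The substantive step is the two-out-of-three property. Given a triangle $X \to Y \to Z \to \Sigma X$ and an object $C \in \mathsf{C}$, applying $\operatorname{Hom}_{\mathsf{T}}(-, \Sigma^i C)$ yields the long exact sequence
\begin{align*}
\cdots \to \operatorname{Hom}_{\mathsf{T}}(\Sigma X, \Sigma^i C) \to \operatorname{Hom}_{\mathsf{T}}(Z, \Sigma^i C) \to \operatorname{Hom}_{\mathsf{T}}(Y, \Sigma^i C) \to \operatorname{Hom}_{\mathsf{T}}(X, \Sigma^i C) \to \cdots
\end{align*}
which, after reindexing, is the long exact sequence relating the three functors $\operatorname{Hom}_{\mathsf{T}}(X, \Sigma^{\bullet}C)$, $\operatorname{Hom}_{\mathsf{T}}(Y, \Sigma^{\bullet}C)$, $\operatorname{Hom}_{\mathsf{T}}(Z, \Sigma^{\bullet}C)$. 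Suppose two of $X, Y, Z$ lie in $^{\perp}\mathsf{C}$. Then two of these three graded groups vanish for all $|i|$ sufficiently large. Since each term of the long exact sequence is sandwiched between one term from each of the other two objects, the third graded group is also trapped between vanishing terms in both directions, so it vanishes for $|i| \gg 0$ as well. The key observation here is that because eventual orthogonality requires vanishing for $|i|$ large on both the positive and negative sides, the long exact sequence gives the needed bound on both tails.

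The only mild subtlety worth being careful about is that the bound on $|i|$ obtained from the long exact sequence depends on $C$, but this is harmless: the definition of ${}^{\perp}\mathsf{C}$ is precisely that for each individual $C \in \mathsf{C}$ the vanishing holds for $|i| \gg 0$, with the bound allowed to depend on $C$. Thus no uniformity is required, and the three verifications above establish that $^{\perp}\mathsf{C}$ is thick. Replacing $\operatorname{Hom}_{\mathsf{T}}(-, \Sigma^i C)$ by $\operatorname{Hom}_{\mathsf{T}}(C, \Sigma^i -)$ throughout and running the identical argument gives the thickness of $\mathsf{C}^{\perp}$.
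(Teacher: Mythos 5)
Your proof is correct and follows essentially the same route as the paper: closure under shifts and summands is checked directly, and the two-out-of-three property comes from the long exact sequence obtained by applying the (here contravariant) Hom functor to a triangle, with the eventual-vanishing bound allowed to depend on $C$. The only difference is cosmetic --- you write out ${}^{\perp}\mathsf{C}$ while the paper writes out $\mathsf{C}^{\perp}$ and declares the other case analogous.
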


\begin{proof}  We show that $ {\sf C}^{\perp}$ is a thick subcategory, an analogous argument gives the other claim.  It is immediate from the definition that ${\sf C}^{\perp}$ is closed under shifts.  Suppose now that \begin{align} \label{SES} X \rightarrow Y \rightarrow Z \rightarrow \Sig X \end{align} is a triangle in $\T$.  The functor $\Hom_{\T}(C,-)$ applied to (\ref{SES}) gives a long exact sequence of abelian groups: \begin{align} \label{les}  \cdots \to  \Hom_{\T}(C,\Sig ^i X) &\rightarrow \Hom_{\T}(C, \Sig^i Y) \\  &\rightarrow  \Hom_{\T}(C,\Sig^i Z)  \rightarrow 
 \Hom_{\T}(C,\Sig ^{i+1} X)  \to \cdots \notag \end{align}  
Therefore, if any two of the terms $\{X,Y,Z\}$ in $\C^{\perp}$, then so is the third.  This shows that ${\sf C}^{\perp}$ is a triangulated subcategory of $\T$.

Since the equality $\Hom_{\T}(C, X \oplus Y) = \Hom_{\T}(C, X) \oplus \Hom_{\T}(C,Y)$ holds for all $C,X,Y \in \T$, vanishing on the left side gives vanishing of both of the terms on the right. From this it follows that ${\sf C}^{\perp}$ is closed under direct summands and therefore it is a thick subcategory of $\T$.
\end{proof}

The following observation is used repeatedly in the sequel. It shows that eventual orthogonality is preserved by taking thick closures. 

\begin{lem} \label{lem:ThickPerp} If ${\sf B}$ and ${\sf C}$ are subcategories of $\T$ such that  ${\sf B} \perp {\sf C}$, then \begin{align*} \Thick({\sf B}) \perp \Thick({\sf C}). \end{align*} 
\end{lem}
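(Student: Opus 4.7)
The plan is to chain two applications of Lemma\autoref{lem:PerpThick}, using that the thick closure is the smallest thick subcategory containing a given collection of objects.

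First I would rewrite the hypothesis $\sf B \perp \sf C$ as the inclusion $\sf B \subseteq {}^{\perp}\sf C$. By Lemma\autoref{lem:PerpThick}, the right-hand side is a thick subcategory of $\T$, so by minimality of the thick closure we immediately obtain $\Thick(\sf B) \subseteq {}^{\perp}\sf C$. Unpacking, this says $\Thick(\sf B) \perp \sf C$, or equivalently $\sf C \subseteq \Thick(\sf B)^{\perp}$.

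Next I would run the same argument on the other side: by Lemma\autoref{lem:PerpThick} again, $\Thick(\sf B)^{\perp}$ is thick, and since it contains $\sf C$ it must contain $\Thick(\sf C)$. This gives $\Thick(\sf C) \subseteq \Thick(\sf B)^{\perp}$, i.e.\ $\Thick(\sf B) \perp \Thick(\sf C)$, as desired.

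There is no real obstacle here; the whole content is that the two thickness statements of Lemma\autoref{lem:PerpThick} allow us to enlarge $\sf B$ and $\sf C$ to their thick closures one at a time without breaking eventual orthogonality. The only thing to be slightly careful about is that the notion of eventual orthogonality used is the two-sided one ($\Hom_{\T}(M,\Sig^i N)=0$ for $|i|\gg 0$), but this is exactly the notion with respect to which both $^{\perp}\sf C$ and $\sf C^{\perp}$ were shown to be thick in Lemma\autoref{lem:PerpThick}, so the argument applies verbatim.
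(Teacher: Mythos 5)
Your proof is correct and follows essentially the same route as the paper's: rewrite $\sf B \perp \sf C$ as $\sf B \subseteq {}^{\perp}\sf C$, use Lemma\autoref{lem:PerpThick} plus minimality of the thick closure to enlarge $\sf B$ to $\Thick(\sf B)$, then repeat on the other side to enlarge $\sf C$ to $\Thick(\sf C)$. No differences worth noting.
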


\begin{proof}   Since $\B \perp \C$ holds, we have a containment
 $\B \subseteq {^{\perp}}\C$.  By Lemma \autoref{lem:PerpThick}, $^{\perp} \C$ is a thick subcategory of $\T$. But $\Thick(\B)$ is a subset of all thick subcategories of $\T$ that contain $\B$ whence  $\Thick(\B) \subseteq {^{\perp}\C}$, or equivalently, $\C \subseteq \Thick(\B)^{\perp}$. Employing Lemma\autoref{lem:PerpThick} again, gives that $\Thick(\B)^{\perp}$ is also thick. The same argument now yields $\Thick(\C) \subseteq \Thick(\B)^{\perp}$, i.e. $\Thick(\B) \perp \Thick(\C)$. \end{proof}

In what follows we restrict our attention to a particular triangulated category. Namely, the bounded derived category $\DR = \D^b(\Rmod)$ of finitely generated left $R$-modules where $R$ is a left Noetherian ring. We make a couple of observations about a certain thick subcategory of $\DR$ that become useful to us later.  The objects in the subcategory $\Thick(R)$ of $\DR$ are precisely the {\it perfect} complexes, that is the complexes isomorphic to finite complexes of finitely generated projective modules (See Section 1.2 in \cite{Bu}).  Second, an equivalence of derived categories restricts to an equivalence of the corresponding subcategories of perfect complexes. The latter can be extracted from the work J. Rickard (See Theorem 6.4 and Propositions 8.2 and 8.3 in \cite{R}). We include a proof for completeness:

\begin{lem} \label{lem:perfects} Let $F : \DR \to \DS$ be an equivalence of triangulated categories. Then $F$ restricts to an equivalence between the subcategories $\Thick(R)$ and $\Thick(S)$.
\end{lem}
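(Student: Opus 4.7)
The plan is to give an intrinsic characterization of $\Thick(R)$ inside $\DR$ that is manifestly preserved by any triangulated equivalence. In the vocabulary set up by the preceding lemmas, the natural candidate is
\[
\Thick(R) \;=\; {}^{\perp}\DR \;=\; \{\,X \in \DR \mid X \perp Y \text{ for every } Y \in \DR\,\}.
\]
Granting this equality (and its analogue for $S$), the lemma follows quickly. A triangulated equivalence $F$ induces natural isomorphisms $\Hom_{\DR}(X, \Sig^{i} Y) \cong \Hom_{\DS}(FX, \Sig^{i} FY)$, so $X \perp Y$ holds in $\DR$ if and only if $FX \perp FY$ holds in $\DS$. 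Since $F$ is essentially surjective, this yields $F({}^{\perp}\DR) = {}^{\perp}\DS$, and the characterization then gives $F(\Thick(R)) = \Thick(S)$.

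The containment $\Thick(R) \subseteq {}^{\perp}\DR$ is immediate: $\Hom_{\DR}(R, \Sig^{i} Y) = H^{i}(Y)$ vanishes for $|i| \gg 0$ since $Y \in \DR$ has bounded cohomology. Hence $R \in {}^{\perp}\DR$, and Lemma \ref{lem:PerpThick} ensures that ${}^{\perp}\DR$ is thick, so it contains $\Thick(R)$.

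The reverse containment ${}^{\perp}\DR \subseteq \Thick(R)$ is the heart of the argument. For $X \in {}^{\perp}\DR$, pairing against each finitely generated module $M$ gives $\Ext_{R}^{i}(X, M) = 0$ for $|i| \gg 0$. Combined with the bounded cohomology of $X$ and left Noetherianity (so that syzygies remain finitely generated), the goal is to produce a uniform index past which a projective resolution of $X$ can be truncated, exhibiting $X$ as a bounded complex of finitely generated projective modules. The principal obstacle is exactly this step: upgrading pointwise $\Ext$-vanishing against individual modules into a uniform truncation bound will likely require exploiting the full hypothesis $X \perp Y$ against suitable bounded complexes $Y \in \DR$, not just modules, in order to extract a single cut-off degree.
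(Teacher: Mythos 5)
Your strategy is exactly the one the paper uses: characterize $\Thick(R)$ intrinsically as ${}^{\perp}\DR$, observe that eventual orthogonality is preserved by any triangulated equivalence, and conclude $F(\Thick(R)) = \Thick(S)$. The containment $\Thick(R) \subseteq {}^{\perp}\DR$ is argued correctly (as in the paper, from $\Hom_{\DR}(R,\Sig^{i}Y) \cong H_{-i}(Y)$ together with Lemma \ref{lem:ThickPerp}). The genuine gap is the reverse containment ${}^{\perp}\DR \subseteq \Thick(R)$, which you yourself call the heart of the argument but never prove: you describe the goal of extracting a truncation bound from the orthogonality hypothesis and then flag the pointwise-versus-uniform difficulty as an obstacle you expect to overcome, without giving an argument. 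Until that containment is established, the intrinsic description of $\Thick(R)$ is not available, and the equivalence $F$ only matches ${}^{\perp}\DR$ with ${}^{\perp}\DS$; it does not yet identify these with the perfect complexes, so the lemma does not follow from what you have written.

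The paper closes precisely this step by quoting a known result rather than reproving it: since modules sit in $\DR$ as stalk complexes, ${}^{\perp}\DR \subseteq {}^{\perp}\Rmod$, and by Lemma 1.2.1 of \cite{Bu} a complex $Q \in \DR$ is perfect if and only if $\Hom_{\DR}(Q,\Sig^{i}M) = 0$ for all $R$-modules $M$ and all $i \gg 0$; this gives ${}^{\perp}\DR \subseteq \Thick(R)$. Note also that your closing suggestion --- that one should test against bounded complexes rather than modules to get a uniform cut-off --- does not add strength: for a fixed $X$ the subcategory $X^{\perp}$ is thick by Lemma \ref{lem:PerpThick}, so if it contains all stalk complexes of modules it contains all of $\DR$; hence pointwise orthogonality to modules and to bounded complexes are the same condition. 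The missing content is exactly the module-level characterization of perfection, which you should either cite (as the paper does) or prove directly, for instance by truncating a projective resolution as in Construction \ref{syzygy} and showing that a sufficiently high syzygy is projective. As it stands, the proposal is a correct reduction with an acknowledged hole at the decisive point.
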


\begin{proof} We first show that $\Thick(R) = {^{\perp}\D^b(R)}$.  Since $\Hom_{\DR}(R,\Sig^iM) = H_{-i}(M)$, we see that  $R \perp M$ for all $M \in \D^b(R)$.  Lemma\autoref{lem:ThickPerp} now gives the containment $\Thick(R) \subseteq {^{\perp}\DR}$.  To see the opposite containment,  consider $\Rmod \subseteq \DR$ as the subcategory consisting of the stalk complexes of degree 0.  Then we see that ${^{\perp}\DR} \subseteq {^{\perp}\Rmod}$.  Now recall that $Q$ is a perfect complex if and only if $\Hom_{\DR}(Q,\Sig^iM) = 0$ for all $R$-modules $M$ and $i \gg 0$ (see Lemma 1.2.1 of \cite{Bu}, for example). This gives that ${^{\perp}\DR} \subseteq  \Thick(R)$.

Similarly, we have $\Thick(S) = {^{\perp}\DS}$.  Now observe that $M \perp \DR$ if and only if $F(M) \perp \DS$, so that $F(\Thick(R)) = \Thick(S)$ as desired.
\end{proof}

We close by recording a connection between extensions over $R$ and morphisms in $\DR$.

\begin{rem} \label{rem:equiv} For any $X,Y \in \Rmod$ and every integer $i$, we have an isomorphism of abelian groups $\Ext_{R}^i(X,Y) \cong \Hom_{\DR}(X,\Sig^iY)$ (See Section 1.5 in \cite{Kr}, for example).  Here we use the convention that $\Ext^i_R(X,Y) = 0$ when $i<0$.  From this we see that the following conditions are equivalent for an $R$-module $M$: 
\begin{enumerate}[(i)]
\item \label{mod} $\Ext_R^i(M,M) = 0 = \Ext_R^i(M,R) $ for all $|i| \gg 0$.
\item \label{cx} In $\DR$, we have $M \perp M$ and  $M \perp R$.
\end{enumerate}
\end{rem}


\section{The Generalized AR Conjecture for $\D^b(R\textrm{-}\rm{\lowercase{mod}})$}
This section contains our main results. In it we give the derived category version of the Generalized Auslander-Reiten Conjecture, show that it is equivalent to the classical conjecture for the module category, and prove that it is stable under a derived  equivalence.

Recall that $\DR $ denotes the bounded derived category of finitely generated left $R$-modules where $R$ is a left Noetherian ring.  We now state the promised derived category version of Conjecture\autoref{conj:garc}.

\begin{conj}  \label{conj:GARC} If $M \in \DR$ is such that $M \perp M$ and $M \perp R$, then $M$ belongs to $\Thick(R)$. \end{conj}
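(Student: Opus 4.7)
The plan is to induct on the amplitude $a(M) := \max\{i : H_i(M) \neq 0\} - \min\{i : H_i(M) \neq 0\}$ of $M$, reducing the general case to the case of a stalk complex, where the hypotheses $M \perp M$ and $M \perp R$ become the Ext-vanishing conditions appearing in Conjecture\autoref{conj:garc}.

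For the base case $a(M) = 0$, the complex $M$ is isomorphic to $\Sig^a N$ for some integer $a$ and some finitely generated $R$-module $N$. Since shifts preserve both $\Thick(R)$-membership and the two $\perp$ relations, I may assume $M = N$. Remark\autoref{rem:equiv} then translates the hypotheses to $\Ext^i_R(N,N) = 0 = \Ext^i_R(N,R)$ for all $|i| \gg 0$, at which point Conjecture\autoref{conj:garc} yields $\pd_R N < \infty$, and so $N$ admits a bounded resolution by finitely generated projectives and consequently lies in $\Thick(R)$.

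For the inductive step, let $a$ be the smallest integer with $H_a(M) \neq 0$ and consider the canonical truncation triangle
\begin{align*}
\Sig^a H_a(M) \to M \to \tau_{>a}M \to \Sig^{a+1} H_a(M),
\end{align*}
in which $\tau_{>a}M$ has amplitude strictly less than that of $M$. Once both outer terms are shown to satisfy the hypotheses of the conjecture, the induction hypothesis places them in $\Thick(R)$, and thickness of $\Thick(R)$ then forces $M \in \Thick(R)$. To transfer the orthogonality conditions across the triangle, I would apply $\Hom_{\DR}(-, \Sig^i R)$ and $\Hom_{\DR}(-, \Sig^i M)$ to obtain long exact sequences, and combine these with the hyperext spectral sequence $E_2^{p,q} = \Ext^p_R(H_{-q}(M), -) \Rightarrow \Hom_{\DR}(M, \Sig^{p+q}(-))$, which expresses morphisms out of $M$ in $\DR$ in terms of module-level Ext groups involving the homology modules of $M$.

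The main obstacle is the base case, which is precisely Conjecture\autoref{conj:garc} applied to $N = H_a(M)$: this emphasizes that the derived-category conjecture is no easier than the classical one and in particular cannot be established unconditionally in view of Schulz's counterexamples. A secondary technical obstacle appears in the inductive step, where the $M \perp M$ hypothesis couples $M$ with itself on both sides of the truncation triangle; I expect this to be addressed by iterating the truncation to peel off one homology module at a time, carefully tracking the ranges in which Ext-vanishing is required, and invoking Lemma\autoref{lem:PerpThick} and Lemma\autoref{lem:ThickPerp} to close the argument.
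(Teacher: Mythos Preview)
Note first that the statement is a conjecture; the paper does not prove it unconditionally but establishes in Theorem~\ref{thm:equiv} that it is equivalent to Conjecture~\ref{conj:garc}. Your proposal is effectively an attempt at the nontrivial direction of that equivalence, and you correctly identify that the base case is exactly the classical conjecture.

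The inductive step, however, has a genuine gap that is not secondary but fatal. From $M \perp M$ and $M \perp R$ one cannot deduce that $H_a(M)$ or $\tau_{>a}M$ satisfies either hypothesis. Applying $\Hom_{\DR}(-,\Sig^i R)$ to the truncation triangle and using $M \perp R$ yields only $\Ext^i_R(H_a(M),R) \cong \Hom_{\DR}(\tau_{>a}M,\Sig^{i+1}R)$ for $i \gg 0$, not vanishing of either side; the spectral sequence you cite runs in the wrong direction for this purpose, since vanishing of the abutment does not force vanishing of $E_2$-terms. Concretely, over $R = k[x]/(x^2)$ the perfect complex $M = (R \xrightarrow{\,x\,} R)$ satisfies $M \perp M$ and $M \perp R$, yet $H_0(M) \cong H_1(M) \cong k$ has $\Ext^i_R(k,k) \neq 0$ for all $i \geq 0$, so neither homology module satisfies the hypotheses and neither lies in $\Thick(R)$. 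Your induction cannot get past this $M$.

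The paper's reduction avoids decomposing $M$ into homology modules. Construction~\ref{syzygy} instead produces a triangle $Q \to M \to \Sig^{n+1}\Omega M \to \Sig Q$ with $Q$ perfect and $\Omega M$ a single module. Since $\Thick(R) \subseteq {^{\perp}\DR}$ and, by Lemma~\ref{lem:ThickPerp}, $M \perp R$ gives $\Thick(R) \subseteq M^{\perp}$, the perfect term $Q$ is invisible to eventual orthogonality on both sides; Proposition~\ref{Cx-Mod} then transfers both hypotheses from $M$ to $\Omega M$, and a single application of Conjecture~\ref{conj:garc} to $\Omega M$ finishes the argument.
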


In order to show that Conjecture \ref{conj:GARC} follows from Conjecture \ref{conj:garc} we need to pass from the derived category to the module category. The following construction describes how we associate a module $\Omega M$ to a complex $M$. In the case when $M$ is a module (viewed as a stalk complex in $\DR$), $\Omega M$ is its first syzygy module.  This observation motivates the construction. 

The {\it infimum} of a complex $M$, denoted $\inf M$, is the lowest degree where homology appears. Similarly, the {\it supremum} of a complex $M$, denoted $\sup M$, is the highest degree where homology appears. 

\begin{const}\label{syzygy}

Fix a complex $M \in \DR$ with $\inf M = 0$ and set $n = \sup M$.  Let $P$ be a projective resolution of $M$, i.e., $P$ is a complex of projective modules quasi-isomorphic to $M$, and consider the following truncations of $P$:
\begin{align*}  P_{\leq n} =  0 \rightarrow P_{n} \rightarrow \cdots \rightarrow P_0 \rightarrow 0  \end{align*}  
\begin{align*} P_{\geq n+1} = \cdots \rightarrow P_i \rightarrow \cdots \rightarrow P_{n+2} \rightarrow P_{n+1}  \rightarrow 0   \end{align*}
which fit into a short exact sequence of complexes 
\begin{align} \label{ses1} 0 \rightarrow P_{\leq n} \rightarrow P \rightarrow P_{\geq n+1} \rightarrow 0\end{align}  

We observe that $P_{\leq n}$ is a finite complex of projective modules, so that it is in  $\Thick(R)$.  Since $H_i(P) = H_i(M) =0$ for $i>n$, we see that $P_{\geq n +1} \cong H(P_{\geq n +1}) \cong \Sig^{n+1} {\rm coker}(\partial_{n+2})$ in $\DR$.  We set $\Omega M =  {\rm coker} (\partial_{n+2}) \in \Rmod$.   The short exact sequence (\ref{ses1}) gives rise to a triangle: 

\begin{align} \label{ses} Q \rightarrow M \rightarrow \Sig^{n+1} \Omega M \rightarrow \Sig Q \end{align} in $\DR$ with $Q \in \Thick(R)$.  

For a general complex $M \in \DR$, with $m = \inf M$, we set $\Omega M = \Sig^m \Omega (\Sig^{-m}M)$ and obtain a triangle as in (\ref{ses}) above (in this case $ n = \sup M - \inf M$).

\end{const}

The key step in showing that Conjectures \ref{conj:garc} and \ref{conj:GARC} are equivalent is the following observation.

\begin{prop} \label{Cx-Mod} If $M \perp M$ and $M \perp R$, then $\Omega M \perp \Omega M$ and $\Omega M \perp R$.  \end{prop}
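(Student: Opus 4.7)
The strategy is to exploit the defining triangle from Construction \ref{syzygy},
\begin{align*} Q \to M \to \Sig^{n+1}\Omega M \to \Sig Q, \end{align*}
with $Q \in \Thick(R)$, and transfer eventual orthogonality from $M$ to $\Omega M$ by applying appropriate $\Hom$-functors to this triangle, invoking the long exact sequence \eqref{les} exactly as in the proof of Lemma \ref{lem:PerpThick}. The whole argument is bookkeeping once one has assembled enough ``pairwise'' orthogonalities among the players $M$, $R$, $Q$, and $\Omega M$.

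The plan is to first stockpile several auxiliary orthogonalities. Since $\Ext^i_R(R,R)=0$ for $i\geq 1$, we have $R \perp R$, and Lemma \ref{lem:ThickPerp} promotes this to $\Thick(R) \perp \Thick(R)$; in particular $Q \perp Q$ and $Q \perp R$. Likewise, because $M \in \DR$ is bounded, $\Hom_{\DR}(R,\Sig^i M) = H_{-i}(M) = 0$ for $|i|\gg 0$, so $R \perp M$, which upgrades to $Q \perp M$. The hypothesis $M \perp R$ upgrades via Lemma \ref{lem:ThickPerp} to $M \perp Q$. So coming into the main argument we know $Q$ is eventually orthogonal (on either side) to each of $Q$, $M$, and $R$, and we additionally know $M \perp M$, $M \perp R$, $M \perp Q$.

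Next I would prove the two auxiliary facts $M \perp \Omega M$ and $Q \perp \Omega M$ by applying the covariant functors $\Hom_{\DR}(M,-)$ and $\Hom_{\DR}(Q,-)$ to the triangle. In the resulting long exact sequence, the terms neighboring $\Hom_{\DR}(X,\Sig^{i+n+1}\Omega M)$ are shifts of $\Hom_{\DR}(X,M)$ and $\Hom_{\DR}(X,Q)$, both of which vanish for $|i| \gg 0$ when $X=M$ (by $M \perp M$ and $M \perp Q$) and when $X=Q$ (by $Q \perp M$ and $Q \perp Q$). This forces $\Hom_{\DR}(X,\Sig^{i+n+1}\Omega M)=0$ for $|i|\gg 0$, giving $M \perp \Omega M$ and $Q \perp \Omega M$.

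Finally, I would apply the contravariant functor $\Hom_{\DR}(-,\Sig^i R)$ to the triangle to obtain $\Omega M \perp R$: the neighboring terms of $\Hom_{\DR}(\Sig^{n+1}\Omega M,\Sig^i R)$ in the long exact sequence are $\Hom_{\DR}(M,\Sig^i R)$ and $\Hom_{\DR}(Q,\Sig^{i-1}R)$, both of which vanish for $|i|\gg 0$ by $M \perp R$ and $Q \perp R$. Similarly, applying $\Hom_{\DR}(-,\Sig^i \Omega M)$ to the triangle gives $\Omega M \perp \Omega M$, since now the neighboring terms $\Hom_{\DR}(M,\Sig^i\Omega M)$ and $\Hom_{\DR}(Q,\Sig^{i-1}\Omega M)$ vanish for $|i| \gg 0$ by the auxiliary facts just established. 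No step is really an obstacle; the only thing to be careful about is the index shift by $n+1$ (which is harmless because eventual orthogonality is insensitive to bounded translations) and the fact that $\perp$ is not symmetric, so both directions needed for the final step have to be set up deliberately.
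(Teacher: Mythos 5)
Your proposal is correct and follows essentially the same route as the paper: both arguments run the hypotheses through the triangle $Q \to M \to \Sig^{n+1}\Omega M \to \Sig Q$ of Construction \ref{syzygy}, first deducing $M \perp \Omega M$ and $Q \perp \Omega M$ and then transferring orthogonality back to $\Omega M$ against $\Omega M$ and $R$. The only difference is presentational: the paper packages your explicit long-exact-sequence bookkeeping by invoking Lemmas \ref{lem:PerpThick} and \ref{lem:ThickPerp} (the subcategories $M^{\perp}$, ${}^{\perp}\Omega M$, ${}^{\perp}R$ are thick, and $\Thick(R)$ is left orthogonal to all of $\DR$), whereas you unwind the same vanishing arguments term by term.
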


\begin{proof}  In order to show $\Omega M \perp \Omega M$, we first use the triangle (\ref{ses}) from Construction\autoref{syzygy} to obtain $M \perp \Omega M$. We then use this to show that $\Omega M \perp \Omega M$.

    Since $R$ belongs to  $M^{\perp}$, Lemma\autoref{lem:ThickPerp} gives that $\Thick(R)$ is contained in $M^{\perp}$. In particular, we see that $M ^{\perp}$ contains $Q$.  Now, in Lemma \autoref{lem:PerpThick} we showed that $M^{\perp}$ is a triangulated subcategory of $\DR$. Therefore, since the first two terms of the triangle in (\ref{ses}) are in $M^{\perp}$, so is its third term, $\Sig^{n+1} \Omega M$.  As $M^{\perp}$ is closed under shifts, we obtain that $\Omega M$ is in  $M^{\perp}$, or equivalently, that $M$ is in ${^{\perp} \Omega M}$.

 Now recall that $\Thick(R)^{\perp} = \DR$ whence $Q \perp \Omega M$.  This gives that $Q$ belongs to ${^{\perp}\Omega M}$, and thus the first two terms of the triangle (\ref{ses}) are in $^{\perp} \Omega M$. It follows that all terms of the triangle (\ref{ses})  must belong to $^{\perp} \Omega M$. Since $^{\perp} \Omega M$ is also closed under shifts, we have $\Omega M \perp \Omega M$.

Finally, since both $M$ and $Q$ are in ${^{\perp}R}$, the triangle (\ref{ses}) also gives that $\Omega M$ is also in  ${^{\perp}R}$, i.e. $\Omega M \perp R$.  
\end{proof}

The next theorem shows that the derived category version of the Generalized Auslander-Reiten Conjecture is equivalent to the classical statement for the module category.  What is somewhat surprising is that if the Generalized Auslander-Reiten Conjecture holds for the module category, then it extends to the entire derived category.

\begin{thm} \label{thm:equiv} Conjecture \ref{conj:garc} holds for $R\textrm{-}\rm{mod}$ if and only if Conjecture \ref{conj:GARC} holds for $\D^b(R \mbox{-}{\rm mod})$. \end{thm}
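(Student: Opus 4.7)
The plan is to prove the two implications separately, using Remark \autoref{rem:equiv} as the dictionary between Ext-vanishing in $\Rmod$ and eventual orthogonality $\perp$ in $\DR$, and invoking the preparatory results (in particular Proposition \autoref{Cx-Mod} and Construction \autoref{syzygy}) to pass between modules and complexes.

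For the direction $(\Leftarrow)$, I assume Conjecture \ref{conj:GARC} holds in $\DR$ and take any finitely generated $M \in \Rmod$ satisfying the hypothesis of Conjecture \ref{conj:garc}. Viewing $M$ as a stalk complex in degree $0$, Remark \autoref{rem:equiv} translates the Ext-vanishing into $M \perp M$ and $M \perp R$. Conjecture \ref{conj:GARC} then forces $M \in \Thick(R)$, i.e., $M$ is perfect. The remaining step is the standard observation that a finitely generated module which is perfect has finite projective dimension: since $R$ is left Noetherian, any bounded complex of finitely generated projectives quasi-isomorphic to $M$ can be truncated to a finite projective resolution of $M$ by finitely generated projectives.

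For the direction $(\Rightarrow)$, I assume Conjecture \ref{conj:garc} holds in $\Rmod$ and take $M \in \DR$ with $M \perp M$ and $M \perp R$. I apply Construction \autoref{syzygy} to obtain a module $\Omega M$ fitting in a triangle
\[ Q \to M \to \Sig^{n+1}\Omega M \to \Sig Q \]
with $Q \in \Thick(R)$. Proposition \autoref{Cx-Mod} guarantees that $\Omega M \perp \Omega M$ and $\Omega M \perp R$, and Remark \autoref{rem:equiv} then rephrases these conditions as the Ext-vanishing hypothesis of Conjecture \ref{conj:garc} for the module $\Omega M$. The classical conjecture yields $\pd_R \Omega M < \infty$, and by the Noetherian hypothesis a finite resolution by finitely generated projectives exists, so $\Omega M$, and hence $\Sig^{n+1}\Omega M$, lies in $\Thick(R)$. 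Since $\Thick(R)$ is a triangulated subcategory of $\DR$ and two of the three vertices of the triangle above already belong to it, the third vertex $M$ must belong to $\Thick(R)$ as well.

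The heavy lifting has already been done in Proposition \autoref{Cx-Mod} and Construction \autoref{syzygy}; once these are in hand, the only remaining point of substance is the elementary dictionary between ``perfect complex concentrated in one degree'' and ``finitely generated module of finite projective dimension'' over a left Noetherian ring, which is applied once in each direction. I expect no real obstacle beyond making sure these translations are stated cleanly.
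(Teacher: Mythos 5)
Your proposal is correct and follows essentially the same route as the paper's proof: Remark \autoref{rem:equiv} for the stalk-complex direction, and Construction \autoref{syzygy} together with Proposition \autoref{Cx-Mod} plus the two-out-of-three property of $\Thick(R)$ for the converse. The only difference is that you spell out the (standard) equivalence between perfection and finite projective dimension for a finitely generated module, which the paper leaves implicit.
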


\begin{proof} 
Assume that Conjecture \autoref{conj:GARC} holds and $M \in \Rmod$ satisfies the hypotheses $\Ext_R^i(M,M) = 0 = \Ext_R^i(M,R) $ for all $|i| \gg 0$. Then (\ref{cx}) of Remark\autoref{rem:equiv} holds and $M$ belongs to $\Thick(R)$ by Conjecture \ref{conj:GARC}.  But this means that $\pd(M)$ is finite. 

For the converse, assume that Conjecture \ref{conj:garc} holds and $M$ in $\DR$ satisfies the hypotheses: $M \perp M$ and  $M \perp R$  in  the bounded derived category $\DR$. Proposition \ref{Cx-Mod} then gives that $\Omega M \perp \Omega M$ and $\Omega M \perp R$ also hold in $\DR$.  Next, Remark\autoref{rem:equiv} together with Conjecture \ref{conj:garc} gives that $\pd(\Omega M)$ is finite so that $\Omega M \in \Thick(R)$.  Since $\Thick(R)$ is a triangulated subcategory of $\DR$, and the first and third terms of the triangle (\ref{ses}) belong to $\Thick(R)$, it follows that its middle term, $M$, also belongs to $\Thick(R)$.
\end{proof}

\comment{

Wei has shown, in \cite{Wei}, that the Generalized Auslander-Reiten is stable under tilting for Artin algebras.  As tilting is a special case of a derived equivalence, Wei asks if the Generalized Auslander-Reiten conjecture is stable under any derived equivalence?  We show that this question has an affirmative answer.  

Wei's proof uses torsion theory that arises from tilting.  Our proof uses the derived version of the Generalized Auslander-Reiten conjecture and the following result of Rickard, and works for all Noetherian rings.  The equivalence of conditions (1), (2) and (3) in Theorem \ref{rickard} are contained in Theorem 6.4 of \cite{Rickard}.  That condition (4) is equivalent to (1) follows from Propositions 8.2 and 8.3 of \cite{Rickard}, since we are only considering Noetherian rings.  We denote by ${\rm Proj}\mbox{-}R$ the category of (not necessarily finitely generated) projective $R$-modules, so that $\Thick({\rm Proj}\mbox{-}R)$ consists of all complexes isomorphic to bounded complexes of arbitrary projective $R$-modules.

The following theorem is extracted from Theorem 6.4 and Propositions 8.2 and 8.3 of \cite{R}.

\begin{thm}[Rickard]  \label{thm:rickard} The following conditions are equivalent for any two Noetherian rings $R$ and $S$:
\begin{enumerate}[\rm(i)] 
\item  $\D^b(R \mbox{-}{\rm mod}) \simeq \D^b(S \mbox{-}{\rm mod})$
\item $\Thick(R) \simeq \Thick(S)$
\end{enumerate}
Moreover, the equivalence in {\rm(ii)} is given by restricting the equivalence in {\rm(i)}.
\end{thm} 
} 

We say that two rings $R$ and $S$ are derived equivalent if there exists an equivalence of triangulated categories between $\D^b(\Rmod)$ and $\D^b(S \mbox{-}{\rm mod})$.  In the next theorem, we show that Conjecture \ref{conj:GARC} is stable under a derived equivalence of Noetherian rings.

\begin{thm} \label{thm:derived} If $R$ and $S$ are derived equivalent and Conjecture \ref{conj:GARC} holds for $R$, then it also holds for $S$.  \end{thm}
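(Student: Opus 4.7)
The plan is to transport the hypothesis back along the given derived equivalence, apply Conjecture \ref{conj:GARC} over $R$, and transport the conclusion forward. Concretely, fix an equivalence of triangulated categories $F : \DS \to \DR$, and let $N \in \DS$ be any object satisfying $N \perp N$ and $N \perp S$. Set $M := F(N)$. Because $F$ commutes with shifts and induces isomorphisms on $\Hom$-sets, eventual orthogonality is transferred verbatim: from $N \perp N$ we immediately obtain $M \perp M$ in $\DR$.

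The delicate point is producing $M \perp R$ from $N \perp S$, since $F$ need not send $S$ to $R$. To bridge this gap I would invoke Lemma \ref{lem:ThickPerp} to upgrade $N \perp S$ to $N \perp \Thick(S)$, then push across $F$ to obtain $M \perp F(\Thick(S))$, and finally apply Lemma \ref{lem:perfects} to identify $F(\Thick(S)) = \Thick(R)$. Since $R \in \Thick(R)$, this yields $M \perp R$ as required. (I am using here that because $F$ is an equivalence, eventual orthogonality in $\DS$ transfers to eventual orthogonality in $\DR$ term by term.)

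With $M \perp M$ and $M \perp R$ in hand, the assumption that Conjecture \ref{conj:GARC} holds for $R$ gives $M \in \Thick(R)$. Applying $F$ once more and invoking Lemma \ref{lem:perfects} in the reverse direction, we conclude $N = F^{-1}(M) \cdot$\,\ldots\ rather, applying Lemma \ref{lem:perfects} to $F^{-1}$, we get $N \in \Thick(S)$, which is precisely Conjecture \ref{conj:GARC} for $S$.

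The proof is essentially a bookkeeping exercise, and there is no real obstacle: the only substantive input is Lemma \ref{lem:perfects}, which guarantees that the equivalence restricts to the subcategories of perfect complexes, and Lemma \ref{lem:ThickPerp}, which lets us replace the single object $S$ by the entire thick subcategory it generates before transporting along $F$. Once those two lemmata are invoked, everything else is formal.
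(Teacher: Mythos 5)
Your proposal is correct and follows essentially the same route as the paper: transport $N$ across the equivalence, use Lemma \ref{lem:ThickPerp} to upgrade $N \perp S$ to $N \perp \Thick(S)$, and use Lemma \ref{lem:perfects} to relate the perfect complexes over $S$ and $R$ (the paper phrases this by choosing $P \in \Thick(S)$ with $F(P)=R$ rather than pushing forward the whole subcategory, but the content is identical). The final step, applying Lemma \ref{lem:perfects} again to conclude $N \in \Thick(S)$ from $F(N) \in \Thick(R)$, also matches the paper.
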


\begin{proof}  Suppose that Conjecture \ref{conj:GARC} holds for $R$ and let $F: \D^b(S\mbox{-}{\rm mod}) \rightarrow \D^b(R\mbox{-}{\rm mod})$ be an equivalence.   Let $N \in \DS$ satisfy $N \perp N$ and $N \perp S$.  By Lemma\autoref{lem:ThickPerp}, we then have that $N$ belongs to ${^{\perp}}\Thick(S)$. 

Since $R \in \Thick(R)$, we have $R = F(P)$ for some $P \in \Thick(S)$, by Lemma \autoref{lem:perfects}.  Then, for all $|i| \gg 0$ we have  \begin{align}  \Hom_{\DR}(F(N), \Sig^i(F(N)\oplus R)) =  \Hom_{\DS}(N, \Sig^i(N \oplus P) ) = 0. \end{align} 
That is, $F(N) \perp F(N)$ and $F(N) \perp R$.

Since Conjecture \ref{conj:GARC} holds for $R$ and $F(N) \perp F(N)$ and $F(N) \perp R$, it follows that $F(N)$ is in $\Thick(R)$.  Another application of Lemma\autoref{lem:perfects} gives that $N$ is in $\Thick(S)$, so that Conjecture \ref{conj:GARC} holds for $S$.
\end{proof}

We can now answer Wei's question regarding the stability of the Generalized AR Conjecture under any derived equivalence. Furthermore, we extend his result to all Noetherian rings.

\begin{cor} \label{cor:conj_is_stable}The Generalized Auslander-Reiten Conjecture is stable under a derived equivalence of Noetherian rings.  \end{cor}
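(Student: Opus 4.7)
The plan is to chain together the two main results we have already established, using Conjecture \ref{conj:GARC} (the derived version of the Generalized Auslander-Reiten Conjecture) as an intermediary that transports well across derived equivalences.

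More precisely, suppose $R$ and $S$ are derived equivalent Noetherian rings, and assume $R$ satisfies the classical Generalized Auslander-Reiten Conjecture (Conjecture \ref{conj:garc}). First I would invoke Theorem \ref{thm:equiv} applied to $R$: this promotes the classical statement for $R\textrm{-mod}$ to the derived statement, so that Conjecture \ref{conj:GARC} holds in $\DR$. Next I would apply Theorem \ref{thm:derived}, which transfers Conjecture \ref{conj:GARC} across the derived equivalence to conclude that it also holds in $\DS$. Finally, I would apply Theorem \ref{thm:equiv} in the other direction for $S$, recovering the classical statement Conjecture \ref{conj:garc} for $S\textrm{-mod}$. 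By symmetry of derived equivalence, the same argument with the roles of $R$ and $S$ swapped gives the reverse implication, so the two rings satisfy Conjecture \ref{conj:garc} simultaneously.

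There is essentially no obstacle at this stage: the entire content of the corollary has been packaged into the preceding results. The derived-category reformulation is doing all the real work, since it is both equivalent to the classical conjecture (Theorem \ref{thm:equiv}) and manifestly a property of the triangulated category $\DR$ together with the distinguished object $R \in \Thick(R)$, and such data is preserved by any triangulated equivalence via Lemma \ref{lem:perfects}. One small point of care in the write-up is to phrase the conclusion symmetrically — that derived equivalent Noetherian rings satisfy Conjecture \ref{conj:garc} simultaneously — rather than as a one-directional implication, but this follows immediately from the fact that a derived equivalence $\DR \simeq \DS$ can be read in either direction.
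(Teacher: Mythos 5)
Your proposal is correct and follows the paper's own argument exactly: pass from Conjecture \ref{conj:garc} to Conjecture \ref{conj:GARC} via Theorem \ref{thm:equiv}, transfer across the derived equivalence with Theorem \ref{thm:derived}, and return via Theorem \ref{thm:equiv}, noting the symmetry of derived equivalence. No gaps.
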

\begin{proof} Theorem\autoref{thm:equiv} shows that the Conjectures \ref{conj:garc} are \ref{conj:GARC} equivalent. The corollary now follows from Theorem\autoref{thm:derived}.
\end{proof}

\end{section}

\end{document}